\documentclass[preprint,12pt]{elsarticle}




\usepackage{amssymb}
\usepackage{amsmath}
\usepackage{amsthm}


\journal{Discrete Mathematics}

\newtheorem{theorem}{Theorem}
\newtheorem{lemma}[theorem]{Lemma}

\newtheorem{proposition}[theorem]{Proposition}
\newtheorem{corollary}[theorem]{Corollary}

\newtheorem{problem}{Problem}
\usepackage{xcolor,graphicx,tikz}
\usepackage{algorithm, algorithmicx, algpseudocode}

\begin{document}

\begin{frontmatter}


\title{Zero blocking numbers of graphs with complexity results}
\cortext[corr]{Corresponding author}
\author{Hau-Yi Lin\fnref{nycu}}
\ead{zaq1bgt5cde3mju7@gmail.com}
\author{Wu-Hsiung Lin\corref{corr}\fnref{nycu}}
\ead{wuhsiunglin@nctu.edu.tw}
\author{Gerard Jennhwa Chang\fnref{ntu}}
\ead{gjchang@math.ntu.edu.tw}
\address[nycu]{Department of Applied Mathematics, National Yang Ming Chiao Tung University, Hsinchu 30010, Taiwan}
\address[ntu]{Department of Mathematics, National Taiwan University, Taipei 10617, Taiwan}






\begin{abstract}
For a graph $G$ in which vertices are either black or white,
a {zero forcing process} is an iterative vertex color changing process such that
the only white neighbor of a black vertex becomes black in the next time step.
A {zero forcing set} is an initial subset of black vertices in a zero forcing process
ultimately expands to include all vertices of the graph;
otherwise we call its complement a {zero blocking set}.
The {zero blocking number} $B(G)$ of $G$ is the minimum size of a zero blocking set.
This paper determines zero blocking numbers of the union and the join of two graphs.
It also determines all minimum zero blocking sets of hypercubes.
Finally, a linear-time algorithm for the zero blocking numbers of trees is given.

\end{abstract}



\begin{keyword}
Zero forcing number \sep Failed zero forcing number \sep
Zero blocking number \sep
Union \sep Join \sep Hypercube \sep Tree \sep
NP-complete.

\MSC 05C69 \sep 05C85 \sep 68R10

\end{keyword}

\end{frontmatter}




\section{Introduction}

For a graph $G$ in which vertices are either black or white, a {\em zero forcing process} is an iterative vertex color changing process such that,
if a vertex $w$ is the only white neighbor of a black vertex $b$, then $w$ becomes black in the next time step; in the process we say that $b$ {\em forces} $w$.
A {\em zero forcing set} is an initial subset of black vertices in a zero forcing process ultimately expands to include all vertices of the graph;
otherwise we call it a {\em failed zero forcing set}
and its complement, the initial subset of white vertices, a {\em zero blocking set}.
The {\em zero forcing number} $Z(G)$ of a graph $G$ is the minimum size of a zero forcing set.
The {\em failed zero forcing number} $F(G)$ of $G$ is the maximum size of a failed zero forcing set.
The {\em zero blocking number} $B(G)$ of $G$ is the minimum size of a zero blocking set.
As $F(G)+B(G)=|V(G)|$ for every graph $G$,
studying $F(G)$ and $B(G)$ are equivalent.

In order to check if a set is a failed zero forcing set (also, a zero blocking set)
it is easier to check if it is a {\em final} one,
that is, it is one for which there is no more white vertex can be forced to be black in the zero forcing process.
Being a final failed zero forcing set (also, a final zero blocking set)
is the same as it is a proper (non-empty, respectively) subset of vertices with the condition
that there exists no black vertex adjacent to exactly one white vertex. 

Notice that if an initial set of black vertices be expanded to include all vertices of the graph
through {\em some} vertex forcing sequence, then it can be done through {\em any} possible vertex forcing sequence.
It is also the case that starting from a failed zero forcing set,
the zero forcing process will terminate at a unique final failed zero forcing set
no matter which forcing vertex sequence is chosen.
This is also true for a zero blocking set, see \cite{2008a}.
Final failed zero forcing sets were also called ``stalled sets'' \cite{2015fjs,2017s}
and final zero blocking sets were also called ``forts'' \cite{2017f}.
A maximum failed zero forcing set is a final failed zero forcing set
and a minimum zero blocking set is a final zero blocking set,
but the converses are not true.
Also, $F(G)$ is the maximum size of a final failed zero forcing set
and $B(G)$ is the minimum size of a final zero blocking set.

The zero forcing process was introduced in \cite{2008AIM} to study minimum rank problems in linear algebra,
and independently in \cite{2007bg} to explore quantum systems memory transfers in quantum physics.
The computation of the zero forcing number of a graph is NP-hard \cite{2008a}.
Considerable effort has been made to find exact values and bounds of this
number for specific classes of graphs, and to investigate a variety of related concepts arising in zero forcing processes.

The concept of failed zero forcing number was first introduced in \cite{2015fjs}.
The computation of the failed zero forcing number is NP-hard \cite{2017s}.
Results for exact values and bounds of this number were also established in
{\cite{2024agm,2016ajps,2020bccknt,2024c,2017f,2021grtn,
2024grtn,2020ksv,2023ktn,2025LLc,2020paA,2020paB,2023su}.
This paper determines zero blocking numbers of unions and joins of graphs.
Extending the result in \cite{2024agm} and \cite{2024c}, it determines all minimum zero blocking sets of hypercubes.
Finally, a linear-time algorithm for the zero blocking numbers of trees is given.

\section{Preliminary}

All graphs in this paper are finite, undirected, without loops and multiple edges.
In a graph, the {\em neighborhood} of a vertex $x$ is the set $N(x)$ of all vertices adjacent to $x$ and the {\em closed neighborhood} of $x$ is $N[x]=\{x\}\cup N(x)$.
A vertex $x$ is {\em isolated} if $N(x)=\emptyset$.
A vertex $x$ is a {\em leaf} if $|N(x)|=1$.
Two distinct vertices $x$ and $y$ are {\em twins} if $N(x)=N(y)$ or $N[x]=N[y]$.

The {\em union} of two graphs $G$ and $H$ is the graph $G \cup H$ with
vertex set $V(G\cup H)=V(G)\cup V(H)$ and
edge set $E(G\cup H)=E(G)\cup E(H)$.
The {\em join} of two graphs $G$ and $H$ is the graph $G + H$ with
vertex set $V(G+H)=V(G)\cup V(H)$ and
edge set $E(G+H)=E(G)\cup E(H)\cup\{xy: x\in V(G), y\in V(H)\}$.
The {\em Cartesian product} of two graphs $G$ and $H$ is the graph $G \Box H$ with
vertex set $V(G\Box H)=\{(x,y): x\in V(G), y\in V(H)\}$ and
edge set $E(G\Box H)=\{(x,y) (x',y'): (x=x', yy'\in E(H))$ or $(xx'\in E(G), y=y')\}$.

In a graph $G$, a {\em stable set} is a subset of vertices
in which no two vertices are adjacent.
The {\em stability number} $\alpha(G)$ of $G$
is the maximum size of a stable set in $G$.
For instance,
$\alpha(P_n)=\lceil \frac{n}{2} \rceil$ for $n\ge 1$ and
$\alpha(C_n)=\lfloor \frac{n}{2} \rfloor$ for $n\ge 3$.
A {\em vertex cover} is a subset $C$ of vertices
such that every edge is incident to at least one vertex in $C$.
A {\em dominating set} of $G$ is a subset $D$ of vertices
such that every vertex in $G$ is either in $D$
or adjacent to some vertex in $D$.
The {\em domination number} $\gamma(G)$ of $G$
is the minimum size of a dominating set of $G$.
For instance,
$\gamma(P_n)=\lceil \frac{n}{3} \rceil$ for $n\ge 1$ and
$\gamma(C_n)=\lceil \frac{n}{3} \rceil$ for $n\ge 3$.

The following properties are easy to see.

\begin{proposition}
If a graph $G$ has an isolated vertex, then $B(G)=1$, else $B(G)\ge 2$.
\end{proposition}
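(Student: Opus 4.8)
The plan is to reduce everything to the characterization of final zero blocking sets recorded in the preliminaries: a nonempty vertex set $W$ is a final zero blocking set precisely when no vertex outside $W$ is adjacent to exactly one vertex of $W$. Since $B(G)$ equals the minimum size of a final zero blocking set, and since the minimum is necessarily attained by a final set, it suffices to analyze singleton sets $W=\{v\}$ and to determine exactly when such a singleton satisfies this condition. First I would record the trivial lower bound $B(G)\ge 1$, valid for every graph, because the complement of a failed zero forcing set is nonempty and hence any zero blocking set contains at least one vertex.

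For the first claim, suppose $v$ is an isolated vertex. Then $v$ has no neighbors, so no vertex outside $\{v\}$ is adjacent to $v$, and the condition holds vacuously; thus $\{v\}$ is a final zero blocking set of size one. Together with $B(G)\ge 1$ this yields $B(G)=1$.

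For the second claim I would argue the contrapositive: a singleton $\{v\}$ can be a final zero blocking set only when $v$ is isolated. Indeed, if $v$ has a neighbor $u$, then $u\notin\{v\}$ is adjacent to exactly one vertex of $\{v\}$, violating the condition; equivalently, in the zero forcing process starting from $V\setminus\{v\}$ the black vertex $u$ has $v$ as its unique white neighbor and immediately forces it, so $V\setminus\{v\}$ is actually a successful zero forcing set rather than a failed one. Hence when $G$ has no isolated vertex, no singleton is a zero blocking set, and therefore $B(G)\ge 2$.

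There is essentially no hard step here; the entire argument is a direct unwinding of the final-zero-blocking-set characterization applied to singletons. The only point demanding care is the legitimacy of restricting attention to singletons, which rests on the fact that a minimum zero blocking set is automatically final, so that testing the ``no black vertex adjacent to exactly one white vertex'' condition on one-element sets suffices rather than having to reason about arbitrary zero blocking sets and their forcing closures.
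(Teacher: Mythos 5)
Your proof is correct and matches the intent of the paper, which states this proposition without proof as one of the properties that are ``easy to see.'' Your unwinding of the final-zero-blocking-set characterization on singletons---an isolated vertex vacuously satisfies the condition, while any vertex with a neighbor is immediately forced---is exactly the straightforward argument the paper leaves to the reader.
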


\begin{proposition} \label{component}
If a graph\ $G$ has $r$ components $G_1, G_2,\ldots,G_r$, then $B(G)=\min\limits_{1\le i\le r} B(G_i)$.
\end{proposition}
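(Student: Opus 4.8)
The plan is to argue entirely through the characterization recorded in the Preliminary: $B(G)$ equals the minimum size of a \emph{final} zero blocking set, that is, the minimum size of a nonempty set $S\subseteq V(G)$ for which no vertex outside $S$ has exactly one neighbor in $S$. The key structural fact to exploit is that this defining condition is local, involving only the neighborhoods $N(v)$, while $G$ carries no edges joining distinct components; consequently a final zero blocking set interacts with only one component at a time.

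First I would establish $B(G)\ge\min_i B(G_i)$. Let $S$ be a minimum final zero blocking set of $G$, so $|S|=B(G)$. Since $S$ is nonempty, the intersection $S_j=S\cap V(G_j)$ is nonempty for some $j$. For every vertex $v\in V(G_j)\setminus S_j$ we have $N(v)\subseteq V(G_j)$ because $G_j$ is a component, so $N(v)\cap S=N(v)\cap S_j$; the condition that no such $v$ has exactly one neighbor in $S$ therefore transfers verbatim to $S_j$ inside $G_j$. Hence $S_j$ is a final zero blocking set of $G_j$, and $B(G)=|S|\ge|S_j|\ge B(G_j)\ge\min_i B(G_i)$.

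For the reverse inequality $B(G)\le\min_i B(G_i)$, I would fix any index $i$, take a minimum final zero blocking set $T$ of $G_i$ with $|T|=B(G_i)$, and check that $T$ is already a final zero blocking set of $G$. A vertex $v\in V(G_i)\setminus T$ satisfies the condition because $N_G(v)=N_{G_i}(v)$, so its neighbor count in $T$ is unchanged; a vertex $v$ lying in any other component has $N_G(v)$ disjoint from $V(G_i)\supseteq T$, hence zero neighbors in $T$, which also satisfies the condition. Since $T$ is nonempty, it is a final zero blocking set of $G$, giving $B(G)\le|T|=B(G_i)$ for every $i$ and thus $B(G)\le\min_i B(G_i)$. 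Combining the two inequalities proves the claim.

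There is no genuine difficulty in this argument; the only point demanding care is the nonemptiness clause in the definition. This is precisely why the first direction must pass to a nonempty piece $S_j$ rather than claim that each restriction $S_i$ is a final zero blocking set of $G_i$, since some of these restrictions may be empty and an empty set is not a valid final zero blocking set.
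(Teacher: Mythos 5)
Your proof is correct: the paper offers no proof at all for this proposition (it is listed among properties that are ``easy to see''), and your argument via final zero blocking sets --- restricting a minimum one to a component it meets, and extending a component's minimum one to the whole graph --- is exactly the natural argument the paper leaves implicit. Your attention to the nonemptiness clause (passing to a single nonempty restriction $S_j$ rather than claiming every restriction works) is the one point of substance, and you handle it properly.
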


Suppose $G$ has no isolated vertex.
If $G$ has a pair of twins $x$ and $y$, then $\{x,y\}$ is a minimum zero blocking set.
On the other hand, if $\{x,y\}$ is a minimum zero blocking set, then $x$ and $y$ is a pair of twins,
for otherwise there is some black vertex $z$ adjacent to exact one of these two vertices, a contradiction.

\begin{theorem}[\cite{2015fjs}] \label{twins}
For a graph $G$ without isolated vertices, $B(G)=2$ if and only if $G$ has a pair of twins.
\end{theorem}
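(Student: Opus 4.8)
The plan is to first invoke Proposition~\ref{component}'s neighbor---Proposition~1---which guarantees $B(G)\ge 2$ when $G$ has no isolated vertex. This reduces the claim to showing that $G$ admits a zero blocking set of size exactly two if and only if it has a pair of twins. I would then unwind the definition of a final zero blocking set applied to $S=\{x,y\}$, treating $S$ as the set of white vertices: the set $S$ blocks precisely when no black vertex $v\in V(G)\setminus\{x,y\}$ has exactly one white neighbor, i.e. when $|N(v)\cap\{x,y\}|\ne 1$ for every $v\notin\{x,y\}$. Rephrased, every vertex other than $x$ and $y$ must be adjacent to both of $x,y$ or to neither.

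For the direction ``twins $\Rightarrow B(G)=2$,'' suppose $x,y$ is a pair of twins and split into the two cases of the definition. In the case $N(x)=N(y)$, any $v\ne x,y$ lies in $N(x)$ iff it lies in $N(y)$, so $|N(v)\cap\{x,y\}|\in\{0,2\}$. In the case $N[x]=N[y]$, for $v\ne x,y$ we again have $v\in N(x)$ iff $v\in N[x]=N[y]$ iff $v\in N(y)$, so once more $|N(v)\cap\{x,y\}|\in\{0,2\}$. In either case $\{x,y\}$ is a zero blocking set, so $B(G)\le 2$, and together with Proposition~1 this forces $B(G)=2$.

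For the converse, suppose $\{x,y\}$ is a (minimum) zero blocking set. The blocking condition, restricted to vertices $v\ne x,y$, says exactly that $v$ is adjacent to $x$ iff it is adjacent to $y$, which yields $N(x)\setminus\{x,y\}=N(y)\setminus\{x,y\}$. I would then distinguish the adjacency of $x$ and $y$ themselves: if $x\not\sim y$, then $y\notin N(x)$ and $x\notin N(y)$, so the equality upgrades to $N(x)=N(y)$; if $x\sim y$, then adjoining the common endpoints gives $N[x]=\{x,y\}\cup(N(x)\setminus\{x,y\})=\{x,y\}\cup(N(y)\setminus\{x,y\})=N[y]$. Either way $x,y$ is a pair of twins.

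The only genuine subtlety---and the step I would state most carefully---is that the blocking condition constrains \emph{only} the vertices outside $\{x,y\}$, so the adjacency between $x$ and $y$ is left entirely free. It is precisely this unconstrained edge that splits the analysis into the false-twin condition $N(x)=N(y)$ and the true-twin condition $N[x]=N[y]$, matching the two alternatives in the definition of twins. Everything else is a direct unfolding of definitions, so no serious obstacle is expected.
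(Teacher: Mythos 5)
Your proposal is correct and follows essentially the same route as the paper's own (very terse) argument preceding Theorem~\ref{twins}: twins yield a size-two set in which no black vertex has exactly one white neighbor, and conversely a minimum zero blocking set $\{x,y\}$ forces every other vertex to be adjacent to both or neither of $x,y$, which is exactly the twin condition. Your version merely spells out the details the paper leaves implicit, namely the case split between $N(x)=N(y)$ and $N[x]=N[y]$ governed by whether $x$ and $y$ are adjacent.
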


In the path $P_n$ or cycle $C_n$, $W$ is a minimum zero blocking set if and only if all vertices not in $W$ form a maximum stable set containing no leaves.
As such a stable set is of size $\lceil\frac{n-2}{2}\rceil$ for $P_n$ and $\lfloor\frac{n}{2}\rfloor$ for $C_n$, we have
$B(P_n) = n - \lceil\frac{n-2}{2}\rceil = \lceil\frac{n+1}{2}\rceil$ and
$B(C_n) = n- \lfloor\frac{n}{2}\rfloor = \lceil\frac{n}{2}\rceil$.

\begin{theorem}[\cite{2015fjs}] \label{path-cycle}
$B(P_n)=\lceil\frac{n+1}{2}\rceil$ for $n\ge 1$.
$B(C_n)=\lceil\frac{n}{2}\rceil$ for $n\ge 3$.
\end{theorem}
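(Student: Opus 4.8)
The plan is to recast the problem in terms of minimum final zero blocking sets and then exploit the rigid local structure of paths and cycles. Recall from the preliminary discussion that $B(G)$ equals the minimum size of a final zero blocking set, and that a nonempty $W\subseteq V(G)$ is a final zero blocking set exactly when no vertex outside $W$ has exactly one neighbor in $W$. It is convenient to work with the complement $S=V(G)\setminus W$ (the black vertices): the condition then reads that every vertex of $S$ has either $0$ or at least $2$ neighbors in $W$. My goal is to show that the minimizers $W$ are precisely the complements of maximum stable sets containing no leaves, and then to read off the two formulas.

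First I would translate the condition into a statement about the induced subgraph $G[S]$. Since every vertex of $P_n$ or $C_n$ has degree at most $2$, a degree-$2$ vertex $v\in S$ must have $0$ or $2$ neighbors in $S$, whereas a leaf $v\in S$ cannot have $\ge 2$ neighbors in $W$ and hence must have its unique neighbor in $S$, giving it degree exactly $1$ in $G[S]$. Thus $G[S]$ is a disjoint union of subpaths in which every vertex that is internal in $G$ has degree $0$ or $2$, while every vertex that is a leaf of $G$ has degree exactly $1$.

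The key step is to argue that, apart from the degenerate configuration $S=V(G)$ (which is excluded because it forces $W=\emptyset$), the set $S$ must be stable and, in the path case, must avoid both leaves. A nontrivial component of $G[S]$ is a path with two endpoints of degree $1$, and by the degree analysis each such endpoint must be a leaf of $G$. Since $C_n$ has no leaves, no nontrivial component can occur there; since $P_n$ has only the two leaves $1$ and $n$, the only nontrivial component possible is the whole path, which is excluded. Hence every component of $G[S]$ is a single vertex, so $S$ is stable, and since an isolated vertex of $G[S]$ has degree $0$, no leaf of $G$ can lie in $S$. For the converse I would check directly that the complement of any stable set of non-leaf vertices satisfies the defining condition, so the minimum zero blocking sets are exactly the complements of maximum stable sets containing no leaves.

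Finally I would convert this characterization into the stated values. For $C_n$ this gives $B(C_n)=n-\alpha(C_n)=n-\lfloor n/2\rfloor=\lceil n/2\rceil$ for $n\ge 3$. For $P_n$ a maximum stable set avoiding the two leaves is a maximum stable set of the internal path on vertices $2,\ldots,n-1$, of size $\alpha(P_{n-2})=\lceil (n-2)/2\rceil$, so $B(P_n)=n-\lceil (n-2)/2\rceil=\lceil (n+1)/2\rceil$. I expect the main obstacle to be the forward direction of the structural characterization: ruling out a non-stable $S$ and leaves in $S$ while correctly isolating the degenerate whole-path or whole-cycle case, for which the degree analysis of $G[S]$ above is the cleanest tool. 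The very small path cases $n=1,2$ should be handled separately, either directly or via the isolated-vertex and twin propositions, since the characterization degenerates there.
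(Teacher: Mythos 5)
Your proposal is correct and follows essentially the same route as the paper: the paper's (sketched) proof rests on exactly the characterization you prove, namely that $W$ is a minimum zero blocking set of $P_n$ or $C_n$ if and only if the black vertices form a maximum stable set containing no leaves, followed by the same counting $B(P_n)=n-\lceil\frac{n-2}{2}\rceil$ and $B(C_n)=n-\lfloor\frac{n}{2}\rfloor$. The only difference is that you supply the degree/component analysis of $G[S]$ (and the separate treatment of $n=1,2$) that the paper leaves implicit.
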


\section{Zero blocking numbers of unions and joins} \label{sec-union-join}

The class of $P_4$-free graphs plays an important role in graph theory.
It is well-known that they are the same as cographs.
A {\em complement reducible graph}, called a {\em cograph} for short,
is defined recursively as follows.
\begin{itemize}
\item[(C1)] A graph of a single vertex is a cograph.
\item[(CU)] If $G_1$ and $G_2$ are cographs, then so is their union $G_1\cup G_2$.
\item[(CJ)] If $G_1$ and $G_2$ are cographs, then so is their join $G_1+G_2$.
\end{itemize}
Cographs have the following property, which gives the zero blocking numbers of them by Theorem \ref{twins}.

\begin{theorem}[\cite{1981cLs}]
Every cograph of at least two vertices has a pair of twins.
\end{theorem}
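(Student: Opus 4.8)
The plan is to prove the statement by induction on the number $n$ of vertices, exploiting the recursive construction (C1)--(CJ) of cographs. The key point is that a cograph $G$ with $n\ge 2$ vertices cannot be a single vertex, so by (CU) or (CJ) it is expressible as $G=G_1\cup G_2$ or $G=G_1+G_2$ for cographs $G_1,G_2$, each with at least one vertex.

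For the base case $n=2$, the two vertices either form $K_1\cup K_1$, in which both vertices have empty neighborhood, or $K_1+K_1$, in which both vertices have closed neighborhood equal to the entire vertex set. In either case the two vertices constitute a pair of twins, so the claim holds.

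For the inductive step with $n\ge 3$, I would write $G=G_1\cup G_2$ or $G=G_1+G_2$. Since $n\ge 3$ and each factor has at least one vertex, at least one factor, say $G_1$, has at least two vertices; by the induction hypothesis $G_1$ contains a pair of twins $x,y$. The remaining task is to verify that $x$ and $y$ are still twins in $G$. The essential observation is that passing from $G_1$ to $G$ modifies the neighborhoods of $x$ and $y$ in exactly the same way: in a union nothing is added to either neighborhood, while in a join the entire set $V(G_2)$ is adjoined to each of $N(x)$ and $N(y)$. Consequently $N_{G_1}(x)=N_{G_1}(y)$ forces $N_G(x)=N_G(y)$, and $N_{G_1}[x]=N_{G_1}[y]$ forces $N_G[x]=N_G[y]$; whichever form of twinship held in $G_1$ is thus inherited by $G$, completing the induction.

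The only delicate point, and the one deserving care, is to keep the two kinds of twins separate: one must not conflate the open-neighborhood condition $N(x)=N(y)$ with the closed one $N[x]=N[y]$, since the join operation preserves each relation only when the same variant is tracked consistently from $G_1$ to $G$ (in particular one should also note that $x$ and $y$ neither gain nor lose adjacency to each other under either operation). Once this bookkeeping is respected, both the union and the join cases are routine, and no genuine obstacle remains.
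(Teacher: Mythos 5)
Your proof is correct, but note that the paper does not prove this statement at all: it is quoted as a known theorem of Corneil, Lerchs and Stewart Burlingham \cite{1981cLs}, so there is no internal argument to compare against. Your induction on the recursive definition (C1), (CU), (CJ) --- the very definition the paper records in Section \ref{sec-union-join} --- supplies a short, self-contained proof that the paper omits. The key step is sound: if $x,y$ are twins in a factor $G_1$, then passing to $G_1\cup G_2$ adds nothing to either neighborhood, and passing to $G_1+G_2$ adds exactly $V(G_2)$ to both, so $N_{G_1}(x)=N_{G_1}(y)$ yields $N_G(x)=N_G(y)$ and $N_{G_1}[x]=N_{G_1}[y]$ yields $N_G[x]=N_G[y]$; your insistence on tracking the open and closed variants separately is exactly the right bookkeeping, since the two kinds of twins (non-adjacent versus adjacent) do not mix under either operation. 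Two cosmetic remarks: your induction is really strong induction, since $G_1$ may have anywhere from $2$ to $n-1$ vertices, and it would be cleaner to say so explicitly; and in the base case one should observe that a two-vertex cograph must arise as a union or join of two copies of $K_1$, which you do. What the paper's citation buys is brevity and deference to the original structural theory of cographs (cotrees and the $P_4$-free characterization); what your argument buys is that the result becomes available from first principles at essentially no cost, which is arguably preferable given that the paper already states the recursive definition it would need.
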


\begin{corollary} \label{cograph}
If $G$ is a cograph,  then $B(G)=1$ if $G$ has an isolated vertex, else $B(G)=2$.
\end{corollary}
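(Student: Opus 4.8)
The plan is to argue by cases on whether $G$ contains an isolated vertex, reducing each case to a result already available in the excerpt.

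First, if $G$ has an isolated vertex, then the proposition asserting that a graph with an isolated vertex satisfies $B(G)=1$ closes this case immediately, with no further work required.

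Second, suppose $G$ has no isolated vertex. I would begin by observing that this hypothesis forces $|V(G)|\ge 2$, since every vertex must then have a neighbor and hence the single-vertex graph is excluded. Because $G$ is a cograph on at least two vertices, the cited theorem of \cite{1981cLs} guarantees that $G$ has a pair of twins. Invoking Theorem~\ref{twins}, which states that for a graph without isolated vertices one has $B(G)=2$ precisely when a pair of twins exists, we conclude $B(G)=2$.

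The two cases are exhaustive, so combining them yields the claimed dichotomy. There is essentially no obstacle here: the corollary is a direct consequence of stringing together the isolated-vertex proposition, the twins theorem of \cite{1981cLs}, and the characterization in Theorem~\ref{twins}. The only subtlety worth flagging is the implicit use of $|V(G)|\ge 2$ in the second case, needed so that the twins theorem applies; the lone one-vertex graph is not an exception, since its vertex is isolated and is therefore already handled by the first case.
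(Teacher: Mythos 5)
Your proof is correct and follows exactly the paper's (implicit) argument: the isolated-vertex case is handled by the first proposition, and otherwise $|V(G)|\ge 2$, so the twins theorem of \cite{1981cLs} combined with Theorem~\ref{twins} gives $B(G)=2$. The remark about why the one-vertex graph causes no trouble is a nice touch but does not change the route.
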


The following gives zero blocking numbers of the union and the join of two graphs.
These also gives the result for cographs, but more other graphs.
For this purpose, a variation of zero blocking number is defined as follows.
The {\em second zero blocking number} $B'(G)$ of a graph $G$ is the minimum size of a zero blocking set of size at least two.
Notice that if $G$ has exactly one vertex, then $B'(G)=\infty$, else $B'(G)$ is finite.

\begin{proposition}\label{2nd}
For a graph $G$, we have 
$$
B'(G)=\left\{\begin{array}{ll}
         2,      & \mbox{if $G$ has at least two isolated vertices}; \\
         B(G-x), & \mbox{if $G$ has exactly one isolated vertex $x$}; \\
         B(G),   & \mbox{if $G$ has no isolated vertex}.
             \end{array}
       \right.
$$ 
\end{proposition}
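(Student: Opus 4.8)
The plan is to reduce everything to the characterization of a final zero blocking set recalled in Section~1: a nonempty $F\subseteq V(G)$ is one precisely when no vertex outside $F$ has exactly one neighbor in $F$, and $B(G)$ is the minimum size of such a set (so that $B'(G)$ is the minimum size of such a set on at least two vertices). The single structural fact I would extract is that an isolated vertex $x$ contributes nothing to this condition, since $x$ is a neighbor of no vertex. Concretely: if $x\notin F$ then $F$ is a final zero blocking set of $G$ if and only if it is one of $G-x$ (the extra outside vertex $x$ has $0\neq 1$ neighbors in $F$); and if $x\in F$ with $F\neq\{x\}$ then $F$ is a final zero blocking set of $G$ if and only if $F\setminus\{x\}$ is one of $G-x$ (deleting the neighborless $x$ changes no neighbor count). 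These two equivalences are the engine for all three cases.

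I would dispatch the outer two cases first. If $G$ has two isolated vertices $x,y$, then $\{x,y\}$ satisfies the condition vacuously — no vertex is adjacent to $x$ or $y$ — so it is a final zero blocking set of size two, and since $B'(G)\ge 2$ by definition we get $B'(G)=2$. If $G$ has no isolated vertex, then by the first proposition $B(G)\ge 2$, so every final zero blocking set already has at least two vertices; hence the size restriction in the definition of $B'(G)$ is vacuous and $B'(G)=B(G)$.

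The substance is the middle case, where $x$ is the unique isolated vertex; I may assume $|V(G)|\ge 2$, the case $G=K_1$ giving $B'(G)=\infty=B(\emptyset)$ by convention. Since $x$ is the only isolated vertex, $G-x$ has none, so by the first proposition every final zero blocking set of $G-x$ has at least two vertices and $B(G-x)\ge 2$. For the upper bound, take a minimum final zero blocking set $F'$ of $G-x$; by the first equivalence $F'$ is also a final zero blocking set of $G$, and $|F'|=B(G-x)\ge 2$, so $B'(G)\le B(G-x)$. For the lower bound, take a final zero blocking set $F$ of $G$ with $|F|\ge 2$: if $x\notin F$ the first equivalence makes $F$ a final zero blocking set of $G-x$, giving $|F|\ge B(G-x)$; if $x\in F$ then $F\neq\{x\}$ (as $|F|\ge 2$), and the second equivalence makes $F\setminus\{x\}$ a final zero blocking set of $G-x$, giving $|F|-1\ge B(G-x)$. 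Either way $|F|\ge B(G-x)$, so $B'(G)\ge B(G-x)$, and equality follows.

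The step I expect to need the most care is precisely the insistence on \emph{final} zero blocking sets. If one allowed arbitrary zero blocking sets, the pair $\{x,v\}$ (with $x$ isolated and $v$ any other vertex) would be a blocking set of size two whose forcing remnant is only $\{x\}$, which would collapse the middle case to the value $2$; working with final sets, whose surviving white part cannot shrink under forcing, is what makes $B(G-x)$ the right answer. The only other point to watch is the hypothesis $|F|\ge 2$, which is exactly what guarantees $F\setminus\{x\}$ is nonempty — and hence a genuine final zero blocking set of $G-x$ — in the $x\in F$ branch.
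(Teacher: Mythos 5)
Your proof is correct; note that the paper states Proposition~\ref{2nd} without any proof at all, so there is no argument of the authors' to compare against line by line. Your reduction to the fort characterization --- a final zero blocking set is a nonempty $F$ such that no vertex outside $F$ has exactly one neighbor in $F$ --- together with the two transfer equivalences (forts of $G$ avoiding the isolated vertex $x$ are exactly the forts of $G-x$; forts of $G$ properly containing $x$ are exactly the sets $F'\cup\{x\}$ with $F'$ a fort of $G-x$) is exactly the right engine, and the three cases are dispatched cleanly, including the degenerate $G=K_1$ convention. More importantly, your closing caveat is not a side remark but the heart of the matter, and it exposes a genuine imprecision in the paper: the paper defines $B'(G)$ as the minimum size of a \emph{zero blocking set} (not a \emph{final} one) of size at least two, and under that literal definition the middle case of the proposition is false. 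Take $G=K_1\cup P_5$ with isolated vertex $x$ and let $v$ be any vertex of the $P_5$: then $\{x,v\}$ is a zero blocking set of size $2$, since a neighbor of $v$ forces $v$ while $x$ is never forced, whereas $B(G-x)=B(P_5)=3$. So the proposition holds only under your reading of $B'(G)$ as the minimum size of a \emph{final} zero blocking set with at least two vertices. That reading is also the one the paper actually needs downstream: in the proof of Theorem~\ref{join}, a minimum second zero blocking set of $G$ is asserted to be a zero blocking set of $G+H$, which fails for $\{x,v\}$ above (once $v$ is forced, every vertex of $H$ sees the single white vertex $x$ and the forcing completes) but holds for a fort of size at least two. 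Your proof therefore establishes the statement in the form the authors need, and correctly pinpoints the definitional slip that should be repaired in the paper.
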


\begin{proposition}
$B(G\cup H)=\min\{B(G),B(H)\}$ for all graphs $G$ and $H$.
\end{proposition}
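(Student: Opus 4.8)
The plan is to prove this by establishing two inequalities, $B(G\cup H)\le\min\{B(G),B(H)\}$ and $B(G\cup H)\ge\min\{B(G),B(H)\}$, the first of which should be immediate and the second of which is the content of the statement. In fact, I expect both directions to follow directly from Proposition \ref{component}, since the union $G\cup H$ has as its components the components of $G$ together with the components of $H$. If we write $G$ with components $G_1,\dots,G_r$ and $H$ with components $H_1,\dots,H_s$, then the components of $G\cup H$ are exactly $G_1,\dots,G_r,H_1,\dots,H_s$, and Proposition \ref{component} gives
\[
B(G\cup H)=\min\Bigl\{\min_{1\le i\le r}B(G_i),\ \min_{1\le j\le s}B(H_j)\Bigr\}=\min\{B(G),B(H)\}.
\]

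Alternatively, and perhaps more cleanly, I would give a direct argument that does not invoke the component decomposition. For the inequality $B(G\cup H)\le\min\{B(G),B(H)\}$, observe that any zero blocking set $W$ of $G$ is also a zero blocking set of $G\cup H$: the forcing process on $G\cup H$ restricted to $V(G)$ behaves exactly as it does on $G$ (no edges run between $V(G)$ and $V(H)$), so if the black vertices of $V(G)$ cannot force all of $V(G)$, then they cannot force all of $V(G\cup H)$. The same argument applies to $H$, giving $B(G\cup H)\le\min\{B(G),B(H)\}$. For the reverse inequality, take a minimum zero blocking set $W$ of $G\cup H$ and set $W_G=W\cap V(G)$ and $W_H=W\cap V(H)$. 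Because there are no edges between the two parts, the forcing process decomposes: $W$ being a final zero blocking set forces each of $W_G$ and $W_H$ to be ``stalled'' within its own part. Since $W$ is nonempty, at least one of $W_G,W_H$ is nonempty, and that nonempty one is a zero blocking set of the corresponding graph, yielding $|W|\ge\min\{B(G),B(H)\}$.

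The main obstacle, such as it is, lies in the reverse inequality: one must be careful to argue that a nonempty $W_G$ genuinely blocks $G$ rather than merely failing to force all of $G\cup H$ for reasons involving $H$. This is where the absence of edges between $V(G)$ and $V(H)$ is essential. Since a black vertex in $V(G)$ has all its neighbors in $V(G)$, it can force a white vertex only within $G$, so the stalled configuration of $W$ restricts to a stalled configuration of $W_G$ in $G$; thus if $W_G\ne\emptyset$ it is a zero blocking set of $G$ and $|W_G|\ge B(G)$, and symmetrically for $H$. I would phrase the write-up around the final-zero-blocking-set characterization recalled in the introduction, namely that a nonempty set is a zero blocking set precisely when no black vertex is adjacent to exactly one white vertex, which makes the restriction argument transparent. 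Given Proposition \ref{component}, however, the one-line component argument is likely the most economical route.
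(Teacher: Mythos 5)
Your proposal is correct and takes essentially the same route as the paper: the paper states this proposition without proof, treating it as an immediate consequence of Proposition \ref{component} applied to the fact that the components of the (disjoint) union $G\cup H$ are exactly the components of $G$ together with those of $H$ --- precisely your one-line argument. Your alternative direct argument via the characterization of final zero blocking sets (no black vertex adjacent to exactly one white vertex) is also sound, but it is redundant given Proposition \ref{component}.
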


\begin{proposition}
If $G\cup H$ has at least two isolated vertices, 
then $B'(G\cup H)=2$, else $B'(G\cup H)=\min\{B'(G),B'(H)\}$.
\end{proposition}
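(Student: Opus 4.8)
The plan is to reduce everything to Proposition \ref{2nd} together with the union formula $B(G\cup H)=\min\{B(G),B(H)\}$, classifying according to the number of isolated vertices. The first observation is that, since $G\cup H$ has no edge joining $V(G)$ to $V(H)$, a vertex is isolated in $G\cup H$ precisely when it is isolated in the one of $G$ or $H$ that contains it. Hence the isolated vertices of $G\cup H$ are exactly those of $G$ together with those of $H$, and I would record the three mutually exclusive cases: $G\cup H$ has at least two isolated vertices, exactly one, or none. The whole argument then consists of applying Proposition \ref{2nd} once to $G\cup H$ and once to each of $G$ and $H$, and reconciling the outputs.

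First I would dispose of the case when $G\cup H$ has at least two isolated vertices. Here Proposition \ref{2nd}, applied directly to the graph $G\cup H$, gives $B'(G\cup H)=2$, which is exactly the first alternative of the statement; no further work is needed.

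For the remaining cases I would show $B'(G\cup H)=\min\{B'(G),B'(H)\}$. When $G\cup H$ has no isolated vertex, neither $G$ nor $H$ does, so Proposition \ref{2nd} gives $B'(G)=B(G)$, $B'(H)=B(H)$, and $B'(G\cup H)=B(G\cup H)$; the union formula then yields $B'(G\cup H)=\min\{B(G),B(H)\}=\min\{B'(G),B'(H)\}$. When $G\cup H$ has exactly one isolated vertex $x$, that vertex lies in exactly one of the two graphs, and by symmetry I may assume $x\in V(G)$; then $G$ has $x$ as its unique isolated vertex while $H$ has none. Proposition \ref{2nd} now gives $B'(G)=B(G-x)$ and $B'(H)=B(H)$. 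Applying Proposition \ref{2nd} to $G\cup H$ gives $B'(G\cup H)=B\bigl((G\cup H)-x\bigr)$, and since $(G\cup H)-x=(G-x)\cup H$, the union formula turns this into $\min\{B(G-x),B(H)\}=\min\{B'(G),B'(H)\}$, as required.

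The argument is essentially bookkeeping, so I expect the only delicate point to be the single-isolated-vertex case: one must correctly locate $x$ in exactly one factor, invoke the symmetry between $x\in V(G)$ and $x\in V(H)$, and match $B'(G)=B(G-x)$ against the term $B(G-x)$ produced by rewriting $(G\cup H)-x$ as $(G-x)\cup H$. A minor degenerate situation worth checking is $G=K_1$ (so $G-x$ is empty and $B'(G)=\infty$), where $(G\cup H)-x=H$ and both sides collapse to $B(H)$; this is consistent with the conventions already fixed for $B'$, so it requires no special treatment beyond a remark.
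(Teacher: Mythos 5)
Your proof is correct and is essentially the argument the paper intends: the paper states this proposition without proof as an immediate consequence of Proposition \ref{2nd} and the union formula $B(G\cup H)=\min\{B(G),B(H)\}$, which is exactly the case analysis you carry out (including the correct handling of the degenerate $G=K_1$ case via the convention $B'(K_1)=\infty$).
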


\begin{theorem} \label{join}
If $G$ and $H$ are graphs of $n$ and $m$ vertices respectively, then
$B'(G+H)=B(G+H)=\min\{B'(G),B'(H),a\}$, where
$$
a=\left\{\begin{array}{ll}
 \gamma(G)+\gamma(H), & \mbox{if $n=1$ or $m=1$ or $\gamma(G)=\gamma(H)=1$}; \\
 3,                   & \mbox{if $n\ge 2$, $m\ge 2$ and}\\ 
                      & \mbox{$(\gamma(G)\le 2$ or $\gamma(H)\le 2$ but not $\gamma(G)=\gamma(H)=1)$}; \\
 4,                   & \mbox{if $\gamma(G)\ge 3$ and $\gamma(H)\ge 3$}.
  \end{array}\right.
$$
\end{theorem}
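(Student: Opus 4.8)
The plan is to work throughout with the combinatorial characterization of (final) zero blocking sets recalled in the preliminaries: a nonempty $W\subseteq V(G+H)$ is a zero blocking set exactly when no vertex outside $W$ has precisely one neighbor in $W$. Since $n,m\ge 1$, every vertex of $G+H$ is adjacent to all of the opposite side and hence has a neighbor, so $G+H$ has no isolated vertex; Proposition \ref{2nd} then gives $B(G+H)=B'(G+H)$ immediately, settling the first equality. It therefore suffices to determine $B'(G+H)$, the smallest size of a zero blocking set $W$ with $|W|\ge 2$.

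First I would split such a $W$ according to $W_G=W\cap V(G)$ and $W_H=W\cap V(H)$, analysing in each part the condition that no outside vertex sees exactly one vertex of $W$. The key observation, used repeatedly, is that for a black vertex $v\in V(G)\setminus W_G$ its neighbors in $W$ are $(N_G(v)\cap W_G)\cup W_H$, so $|N(v)\cap W|=|N_G(v)\cap W_G|+|W_H|$, and symmetrically on the $H$ side. If $W_H=\emptyset$ (so $W\subseteq V(G)$), each vertex of $V(H)$ sees all of $W$, which is harmless because $|W|\ge 2$, and the remaining conditions say exactly that $W$ is a zero blocking set of $G$; hence the minimum over such $W$ is $B'(G)$, and symmetrically $B'(H)$ when $W_G=\emptyset$. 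This identifies two of the three quantities in the claimed minimum.

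The heart of the argument is the mixed case $W_G,W_H\neq\emptyset$, where I claim the minimum size is exactly $a$. Writing $p=|W_G|\ge 1$ and $q=|W_H|\ge 1$, the counting formula shows that the side condition coming from $V(H)$ is automatic once $p\ge 2$ and, when $p=1$, is equivalent to $W_H$ being a dominating set of $H$; symmetrically for the other side with $q$. This reduces the problem to minimizing $p+q$ subject to the implications $p=1\Rightarrow W_H$ dominates $H$ and $q=1\Rightarrow W_G$ dominates $G$. I would then enumerate the four regimes according to whether each of $p$ and $q$ equals $1$ or is at least $2$, using that a singleton dominating set exists iff the graph has $\gamma=1$, that a dominating set of size $\ge 2$ has minimum size $\max\{2,\gamma\}$ whenever the graph has at least two vertices, and that the regime $p,q\ge 2$ costs $4$ and needs $n,m\ge 2$. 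Collating these values — size $2$ attainable only when $\gamma(G)=\gamma(H)=1$, size $3$ attainable as soon as $n,m\ge 2$ and some side has $\gamma\le 2$ without both being $1$, value $\gamma(G)+\gamma(H)$ in the degenerate cases $n=1$ or $m=1$, and $4$ otherwise — reproduces the three branches of the formula for $a$.

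The main obstacle will be the bookkeeping in this final collation: the boundary cases $n=1$, $m=1$ and $\gamma\in\{1,2\}$ interact with the step of padding a minimum dominating set up to size $2$, and for each branch one must check both achievability (by exhibiting a concrete $W$) and optimality (by showing the domination constraints forbid anything smaller). Once $a$ is verified to be the mixed minimum, taking the minimum of the three contributions $B'(G)$, $B'(H)$, $a$ — and recalling that $B'=\infty$ for a single vertex, so the $V(G)$- or $V(H)$-type contribution simply drops out when $n=1$ or $m=1$ — yields $B'(G+H)=\min\{B'(G),B'(H),a\}$ and completes the proof.
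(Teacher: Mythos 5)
Your proposal is correct and follows essentially the same route as the paper: the same decomposition of $W$ into $W\cap V(G)$ and $W\cap V(H)$, with the pure cases yielding $B'(G)$ and $B'(H)$, and the same key observation that a single white vertex on one side forces the white vertices on the other side to form a dominating set. The only difference is organizational---you package everything as a single if-and-only-if characterization followed by minimizing $|W_G|+|W_H|$ over the four regimes, whereas the paper proves the upper bound by explicit constructions and the lower bound by a case analysis on $|W|$---but the mathematical content is identical.
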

\begin{proof}
Since $G+H$ has no isolated vertex, $B'(G+H)=B(G+H)$ by Proposition \ref{2nd}.

First, prove that $B(G+H)\le \min\{B'(G),B'(H),a\}$.
Since a minimum second zero blocking set of $G$ is
a zero blocking set of $G+H$, we have $B(G+H)\le B'(G)$.
Similarly, $B(G+H)\le B'(H)$.
Now we construct a zero blocking set of $G+H$ for each condition of $a$ in three cases.

Case 1. $n=1$ or $m=1$ or $\gamma(G)=\gamma(H)=1$.
In this case we choose
a minimum dominating set $D$ (respectively, $D'$) of $G$ (respectively, $H$).
Then $D\cup D'$ is a zero blocking set of $G+H$,
and so $B(G+H)\le |D\cup D'|= \gamma(G)+\gamma(H)=a$, 
the last equality follows from the definition if $a$.

Case 2. $n\ge 2$, $m\ge 2$,
($\gamma(G)\le 2$ or $\gamma(H)\le 2$ but not $\gamma(G)=\gamma(H)=1$).
In this case, $a=3$.
By symmetry, assume $\gamma(G)\le 2$.
As $n\ge 2$, we may choose
a dominating set $D$ of $G$ with $|D|=2$ and a vertex $y\in V(H)$.
Then $D\cup\{y\}$ is a zero blocking set of $G+H$,
and so $B(G+H)\le |D\cup\{y\}| = 3=a$.

Case 3. $\gamma(G)\ge 3$ and $\gamma(H)\ge 3$.
In this case, $a=4$. As $n\ge 3$ and $m\ge 3$, we may choose vertices $x,y\in V(G)$ and $z,w\in V(H)$.
Then $\{x,y,z,w\}$ is a zero blocking set of $G+H$, and so $B(G+H)\le |\{x,y,z,w\}|=4=a$.

In summary, $B(G+H)\le \min\{B'(G),B'(H),a\}$.

To show $B(G+H)\ge \min\{B'(G),B'(H),a\}$,
we choose a minimum zero blocking set $W$ of $G+H$.
Let $D=W\cap V(G)$ and $D'=W\cap V(H)$.
If $|D'|=0$, then every vertex in $V(H)$ is black and so has at least two white neighbors, which are in $V(G)$.
Also, $W=D$ is a zero blocking set of $G$ and so is a second zero blocking set of $G$.
Hence $ B(G+H) =  |W|\ge B'(G)$.
Similarly, if $|D|=0$, then $B(G+H) =  |W|\ge B'(H)$.

Now suppose $|D|\ge 1$ and $|D'|\ge 1$.
We consider the following cases. First, claim that if $|D|=1$ then $D'$ is a dominating set of $H$,
for otherwise $V(H)$ has a black vertex adjacent to the only white vertex in $D$,
a contradiction.
Similarly, if $|D'|=1$, then $D$ is dominating set of $G$.

Case 1.
If $n=1$, then $|D|=1$ and so $D$ (respectively, $D'$) is a dominating set of $G$ (respectively, $H$ by the above claim).
In this case, $ B(G+H) =  |W|=|D|+|D'|\ge \gamma(G)+\gamma(H)=a$.
Similarly, if $m=1$, then $ B(G+H) = |W|\ge a$.

Case 2.
If $n\geq 2$ and $m\ge 2$,
then $a=2,3$ or $4$ depending on the values of $\gamma(G)$ and $\gamma(H)$.

Case 2.1.
If $|W|=2$, then $|D|=|D'|=1$ and so $D$ (respectively, $D'$) is a dominating set of $G$ (respectively, $H$) by the above claim.
In this case, $\gamma(G)=\gamma(H)=1$, implying $a=\gamma(G)+\gamma(H)=2$.
Thus $ B(G+H) = |W|=a$.

Case 2.2.
If $|W|=3$, then either $|D|=1$ or $|D'|=1$.
By symmetry, assume that $|D|=1$ and so $|D'|=2$.
By the above claim, $D'$ is a dominating set of $H$
and so $\gamma(H)\le 2$, implying $a=2$ or $3$.
Thus $B(G+H) = |W|\ge a$.

Case 2.3.
Otherwise, $B(G+H) = |W|\ge 4\ge a$.
 
In summary, $B(G+H)\ge \min\{B'(G),B'(H),a\}$,
and theorem follows.
\end{proof}

Corollary \ref{cograph} also follows from the above theorem by induction.
Since wheel $W_n=K_1+C_n$, the following result also follows from the fact that
$B'(C_n)=B(C_n)=\lceil\frac{n}{2}\rceil$ and $\gamma(C_n)=\lceil\frac{n}{3}\rceil$.

\begin{theorem} [\cite{2015fjs}]
If $n\ge 3$, then $B(W_n)=\lceil\frac{n+3}{3}\rceil$ except $B(W_4)=2$.
\end{theorem}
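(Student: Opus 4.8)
The plan is to read $W_n$ as the join $K_1 + C_n$ and invoke Theorem~\ref{join} with the single vertex $K_1$ playing the role of the first factor and the cycle $C_n$ the second. The first thing I would do is assemble the three quantities appearing in that theorem. Because $K_1$ has exactly one vertex, $B'(K_1)=\infty$, so this term can never realize the minimum and is discarded at once. Because $C_n$ has no isolated vertex, Proposition~\ref{2nd} together with Theorem~\ref{path-cycle} gives $B'(C_n)=B(C_n)=\lceil\frac{n}{2}\rceil$.

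It remains to evaluate $a$. Since the first factor $K_1$ has a single vertex, the first defining case of $a$ applies, so $a=\gamma(K_1)+\gamma(C_n)=1+\lceil\frac{n}{3}\rceil=\lceil\frac{n+3}{3}\rceil$. Feeding these values into Theorem~\ref{join} collapses the formula to
$$B(W_n)=\min\left\{\left\lceil\tfrac{n}{2}\right\rceil,\ \left\lceil\tfrac{n+3}{3}\right\rceil\right\}.$$

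The only remaining work, and the step where I expect the real care to be needed, is the ceiling comparison that turns this minimum into the stated closed form. The underlying real inequality $\frac{n}{2}\ge\frac{n+3}{3}$ is equivalent to $n\ge 6$, so for every $n\ge 6$ the minimum is automatically $\lceil\frac{n+3}{3}\rceil$. For the three small values $n\in\{3,4,5\}$ I would simply compute both ceilings: $n=3$ gives $2=2$ and $n=5$ gives $3=3$, so the formula $\lceil\frac{n+3}{3}\rceil$ persists, whereas $n=4$ gives $\lceil\frac{4}{2}\rceil=2<3=\lceil\frac{7}{3}\rceil$ and is the sole case in which the cycle term strictly wins. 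Recognizing that this single value must be split off as the exception $B(W_4)=2$---rather than absorbed silently into the general formula---is the main subtlety of an otherwise routine deduction from Theorem~\ref{join}.
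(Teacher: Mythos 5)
Your proposal is correct and follows exactly the paper's route: the paper likewise obtains this result by writing $W_n=K_1+C_n$ and applying Theorem~\ref{join} with $B'(C_n)=B(C_n)=\lceil\frac{n}{2}\rceil$ and $\gamma(C_n)=\lceil\frac{n}{3}\rceil$, leaving the ceiling comparison (and the $n=4$ exception, where $\lceil\frac{4}{2}\rceil=2$ beats $\lceil\frac{7}{3}\rceil=3$) implicit. Your write-up simply makes explicit the case analysis the paper leaves to the reader.
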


Similar values are available for the fan $mK_1+P_n$ and the cone $mK_1+C_n$.
If $m=1$, then $B(K_1+P_n)=\lceil\frac{n+3}{3}\rceil$;
and if $m\ge 2$, then $B(mK_1+P_n)=B(mK_1+C_n)=2$, since there are twins.

\section{Minimum zero blocking sets of hypercubes}  \label{sec-cube}

The {\em $n$-dimensional hypercube} is the graph $Q_n:=\overbrace{P_2\Box P_2\Box\cdots\Box P_2}^n$.
For every two sets $A$ and $B$,
denote $A \Delta B = (A\cup B)\backslash (A \cap B)$.
It is easy to see that $A\Delta\emptyset=A$ and $A\Delta A=\emptyset$ for every set $A$.
Using the set notation, we may describe $Q_n$ as the graph with
vertex set $V(Q_n)=\{x: x\subseteq [n]\}$ and
edge set $E(Q_n) =\{xy: x\subseteq y \subseteq [n],\, |y\backslash x|=1\}
=\{xy: x,y\subseteq [n],\, |x\Delta y|=1\}$,
where $[n]=\{1,2,\ldots,n\}$.
Notice that $Q_n$ is vertex transitive, that is, for every two vertices $x$ and $y$ there is an automorphism $f$ on $Q_n$ such that $f(x)=y$.
This is true as the function $f:V(Q_n)\to V(Q_n)$ defined by $f(v)=v\Delta x\Delta y$
satisfies that $f(u)\Delta f(v)=(u\Delta x\Delta y)\Delta(v\Delta x\Delta y)=u\Delta v$ for every two vertices $u$ and $v$.

It was proved in \cite{2024agm} and \cite{2024c} that $B(Q_n)=n$ for $n\ge 2$.
The following theorem extends their result to characterize minimum zero blocking sets $W$ of $Q_n$ with $|W|=n$.

\begin{theorem}
If $n\geq 2$, then $B(Q_n)=n$.
Furthermore, a zero blocking set $W$ of $Q_n$ is minimum
if and only if either $W=N(x)$ for some vertex $x$
or else $n=4$ with $W=N(y)\Delta N(z)$ for two vertices $y$ and $z$ with $|y\Delta z|=2$.
\end{theorem}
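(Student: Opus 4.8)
The plan is to use the subset model of $Q_n$, so that $V(Q_n)$ is the power set of $[n]$, $N(x)=\{x\Delta\{i\}:i\in[n]\}$, and the two classes of the bipartition are the vertices of even and of odd cardinality. Throughout I use two things already available: a minimum zero blocking set is a final one, i.e. a nonempty $W$ with $|N(v)\cap W|\neq 1$ for every $v\notin W$; and $B(Q_n)=n$ is known, so a minimum zero blocking set has exactly $n$ vertices. For the (if) direction I would just verify the two families. For $W=N(x)$ we have $|W|=n$, and any $v\notin N(x)$ satisfies $|N(v)\cap N(x)|=2$ if $|v\Delta x|=2$, equals $0$ if $|v\Delta x|\ge 3$, and equals $n\ge 2$ if $v=x$; so $|N(v)\cap W|\neq 1$. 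For $n=4$ and $|y\Delta z|=2$, inclusion--exclusion gives $|N(y)\Delta N(z)|=4+4-2|N(y)\cap N(z)|=4$ (this is exactly where $2n-4=n$ forces $n=4$), and by vertex transitivity I would reduce to one canonical pair $\{y,z\}$ and check the fort condition directly.

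The substance is the (only if) direction. First I would show a minimum fort $W$ is monochromatic. Writing $W=W_X\cup W_Y$ with $W_X,W_Y$ in the even and odd class, the fort condition splits into: every even $v\notin W_X$ has $\neq 1$ neighbour in $W_Y$, and symmetrically. Hence the even vertices singly covered by $W_Y$ all lie in $W_X$, so $|W_X|\ge |S(W_Y)|$, and likewise $|W_Y|\ge |S(W_X)|$, where $S(T)$ denotes the set of vertices in the opposite class covered exactly once by $T$. A counting bound supplies the lower estimate: for a set $T$ of $t$ vertices in one class, $\sum_x\binom{c(x)}{2}=2P$ where $c(x)$ is the coverage and $P\le\binom{t}{2}$ is the number of distance-$2$ pairs in $T$, whence $|S(T)|\ge nt-2t(t-1)$. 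With $q=\min(|W_X|,|W_Y|)\le n/2$ (say $q=|W_Y|$) and $|W_X|+|W_Y|=n$, this turns into $n-q\ge q(n-2q+2)$, i.e. $f(q):=2q^2-(n+3)q+n\ge 0$. But $f$ is convex with $f(1)=-1<0$ and $f(n/2)=-n/2<0$, so $f(q)<0$ for all $q\in[1,n/2]$; therefore no split with both parts nonempty is possible.

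Next I would translate by an element of $W$ (an automorphism of $Q_n$) to assume $\emptyset\in W$ and $W$ in the even class, and rewrite the fort condition in the clean local form: for every $w\in W$ and every $i\in[n]$ there is $j\neq i$ with $w\Delta\{i,j\}\in W$ (equivalently, the weight-two links at each $w$ cover $[n]$). Applied at $\emptyset$ this says the $2$-element members of $W$, read as edges on $[n]$, cover all of $[n]$, so there are at least $\lceil n/2\rceil$ of them. The core is then a Hamming-weight count at an element $v\in W$ of maximum size $m$: its distance-$2$ partners have size $m-2$ or $m$ and must cover $[n]$, so $a_{m-2}+a_m\ge\lceil n/2\rceil+1$ (writing $a_\ell$ for the number of size-$\ell$ members). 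If $m\ge 6$ the layers $0,2,m-2,m$ are distinct and $n=\sum_k a_{2k}\ge 1+\lceil n/2\rceil+(\lceil n/2\rceil+1)\ge n+2$, a contradiction, so $m\le 4$. If $m=4$ and $n>4$, each of the $n-4$ coordinates outside $v$ forces its own distinct size-$4$ partner, giving $a_4\ge n-3$ and $n\ge 1+\lceil n/2\rceil+(n-3)$, i.e. $n\le 4$, again a contradiction. Hence either $m\le 2$ or $n=4$. When $m\le 2$ the $2$-element members form a graph on $[n]$ with $n-1$ edges in which every edge is dominating; such a graph is connected (so a tree) of diameter $\le 2$, i.e. a star, giving $W=\{\emptyset\}\cup\{\{c,j\}:j\neq c\}=N(\{c\})$. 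When $n=4$ and $m=4$ the unique top element is $[4]$ and the two $2$-element members must form a perfect matching, which is precisely the normalized form of $N(y)\Delta N(z)$ with $|y\Delta z|=2$. Undoing the translation, an automorphism preserving both families, completes the proof.

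The hard part is this last block of weight counting, and in particular isolating why $n=4$ is the sole exception. All three contradictions come from the top layer needing partners in a strictly lower layer, and they collapse exactly when ``size $m-2$'' coincides with ``size $2$'' and there are no coordinates outside the support, i.e. $m=4$ with $v=[n]$, which forces $n=4$. I expect the bookkeeping of which distance-$2$ partners live in which layer, and checking that the counted layers are genuinely distinct, to be the most error-prone step; I would therefore isolate the local link reformulation as a separate lemma and prove the monochromatic classification from it in full detail, treating $n\le 4$ by hand.
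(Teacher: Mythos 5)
Your proposal is correct, and it takes a genuinely different route from the paper's proof. The paper's argument is self-contained and local: it normalizes a black vertex to $\emptyset$ with exactly $r\ge 2$ white neighbors, counts over the mixed pairs $S=\{\{i,j\}:1\le i\le r<j\le n\}$ and their white witnesses $T$, and obtains $|W|\ge r+r(n-r)/2=n+(r-2)(n-r)/2$; this single inequality delivers both the lower bound $B(Q_n)\ge n$ and, through its equality analysis ($r=2$ or $r=n$, followed by cases $n=3$, $n=4$, $n\ge 5$ on the structure of $T$), the full characterization. You instead take $B(Q_n)=n$ as known input (legitimate, since the paper itself attributes this value to prior work, though it reproves it), and extract structure from minimality: the bipartite double counting showing a minimum fort is monochromatic (your inequality $n-q\ge q(n-2q+2)$, i.e.\ $2q^2-(n+3)q+n\ge 0$, and the convexity check at $q=1$ and $q=n/2$ are correct), translation to $\emptyset\in W$, the local link reformulation of the fort condition, the layer count forcing the maximum weight to satisfy $m\le 2$ or $m=n=4$, and the star/perfect-matching endgame --- each of these steps checks out. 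What each approach buys: the paper's proof is shorter and self-contained, getting bound and characterization in one stroke; yours genuinely needs the exact value as external input (your counting, as set up, does not recover the lower bound for monochromatic sets without extra work), but it is more structural --- monochromaticity of minimum forts and the dominating-edge/edge-cover reformulation are attractive facts in their own right --- and it isolates more transparently why $n=4$ is the unique exception, namely that the top layer $m$ collides with layer $2$ exactly when $m=4$ and $v=[n]$, mirroring the size coincidence $2n-4=n$ on the verification side.
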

\begin{proof}
Choose $W=N(\emptyset)=\{x: |x|=1\}$ as the set of white vertices.
Consider all black vertices $y$ in three cases.
If $y=\emptyset$, then $y$ is adjacent to white vertices $\{1\}$ and $\{2\}$.
If $y=\{i,j\}$, then $y$ is adjacent to two white vertices $\{i\}$ and $\{j\}$.
If $|y|=3$, then $y$ is adjacent to no white vertex.
Hence $W$ is a zero blocking set and so $B(Q_n)\le |W|=n$.

On the other hand, suppose $W$ is a minimum zero blocking set of $Q_n$.
Since $n< 2^n$, there is some black vertex, say $\emptyset$ by symmetry,
adjacent to some and so exactly $r\ge 2$ white vertices, say $\{1\}, \{2\}, \ldots, \{r\}$ by symmetry.
Consider the set $S=\{\{i,j\}:1\le i\leq r<j\le n\}$ of size $r(n-r)$,
in which each vertex $\{i,j\}$ is adjacent to one white vertex $\{i\}$ and one black vertex $\{j\}$.
If $\{i,j\}\in S\backslash W$, then it is adjacent to some white vertex $\{i,j,k\}$.
Let $T$ be the set of all such $\{i,j,k\}$.
Notice that each $\{i,j,k\}\in T$ is adjacent to at most two vertices in $S\backslash W$, since $i\le r<j$ implies that $\{i,j,k\}$ has only two neighbors in $S$.
Hence $|T|\ge |S\backslash W|/2$.
Then
$|S\cap W|+|T|\ge |S\cap W|+|S\backslash W|/2\ge |S|/2=r(n-r)/2$.
And, if the equalities hold, then $|S\cap W|=0$ and each $\{i,j,k\}\in T$ has two black neighbors in $S$;
i.e., $\{i,j\}$ and $\{k,j\}$ when $k\le r$, and $\{i,j\}$ and $\{i,k\}$ when $k>r$.
Therefore,
$$
 |W|\ge |N(\emptyset)\cap W|+|S\cap W|+ |T|
    \ge r+r(n-r)/2=n+(r-2)(n-r)/2\ge n.
$$
Hence $B(Q_n)=|W|\ge n$ and so $B(Q_n)=n$.

Next to see the second part of the theorem.
If $W=N(x)$, then it is a minimum zero blocking set by using the same arguments for $N(\emptyset)$.
If $n=4$ and $|y\Delta z|=2$, then $|N(y)\cap N(z)|=2$ and $|N(y)|=|N(z)|=|N(y)\Delta N(z)|=4$.
It is not hard to check that $N(y)\Delta N(z)$ is a zero blocking set as desired.

On the other hand, suppose $W$ is a minimum zero blocking set, as in the second paragraph.
Then $|S\cap W|+|T|=r(n-r)/2$ and $(r-2)(n-r)/2=0$.
Either $r=2$ or $n=r$.
The case of $n=r$ or $n=2$ gives that $W=N(\emptyset)$.
The case of $r=2$ with $n\geq 3$ gives that $|S\cap W|=0$ and $|T|=2(n-2)/2=n-2$,
$W=\{\{1\},\{2\}\}\cup T,  S=\{\{1,j\},\{2,j\}:3\leq j\leq n\}$ and
$T\subseteq \{\{1,2,j\},\{1,j,k\},\{2,j,k\}: 3\le j\le n, 3\le k\le n\}$
in which each vertex in $S$ is adjacent to exactly one vertex in $T$.

Case 1. $n=3$.
In this case we have $T=\{\{1,2,3\}\}$ and so $W=N(\{1,2\})$.

Case 2. $n=4$.
In this case we have $T=\{\{1,2,3\},\{1,2,4\}\}$ or $T=\{\{1,3,4\}, \{2,3,4\}\}$.
For the former case, $W=N(\{1,2\})$.
For the latter case, $W=N(y)\Delta N(z)$ 
where $y=\{1,2\}$ and $z=[4]$ with $|y\Delta z|=|\{3,4\}|=2$.

Case 3. $n\geq 5$.
In this case, if $\{i,j,k\}\in T$ for some $1\le i\le 2 <j<k\leq n$,
then for $\ell>2$ other than $j$ and $k$
we have $\{i,j,\ell\}, \{i,k,\ell\}, \{j,k,\ell\}\notin T$,
and so the black vertex $\{i,j,k,\ell\}$ is adjacent to exactly one white vertex $\{i,j,k\}$,
a contradiction.
Hence, $T=\{\{1,2,j\}: 2<j\le n\}$, so $W=N(\{1,2\})$.

The second part of the theorem then follows.
\end{proof}

\section{Complexity results} \label{sec-tree}
 
This section considers complexity results for zero blocking numbers of graphs.

\begin{problem}
{\bf The (Final) Zero Blocking Problem}\\
{\rm Given:} A graph $G$ and an integer $k$.\\
{\rm Question:} Does $G$ contain a (final) zero blocking set of size at most $k$?
\end{problem}

As a zero blocking set is a superset of some final zero blocking set,
these two versions are essentially the same.
Shitov \cite{2017s} proved the complement version of the problem is NP-complete.

\begin{problem}
{\bf The (Final) Failed Zero Forcing Problem}\\
{\rm Given:} A graph $G$ and an integer $k$.\\
{\rm Question:} Does $G$ contain a (final) failed zero forcing set of size at least $k$?
\end{problem}

\begin{theorem}[\cite{2017s}]
The {final} failed zero forcing problem is NP-complete.
\end{theorem}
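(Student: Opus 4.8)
The plan is to establish the two defining requirements of NP-completeness separately: membership in NP, which is routine, and NP-hardness, which I would obtain directly from Shitov's result on the complement problem rather than by constructing a fresh reduction.

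First I would verify membership in NP. A certificate is simply a subset $S\subseteq V(G)$ proposed as a final failed zero forcing set. Given $S$, one checks in polynomial time that $|S|\ge k$, that $S\ne V(G)$, and that the final (stalled) condition holds, namely that no black vertex of $S$ is adjacent to exactly one white vertex of $V(G)\setminus S$. As recalled in the Preliminary section, these three conditions exactly characterize a final failed zero forcing set, so the verification is both correct and efficient.

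For NP-hardness I would exploit the complementation duality between the two problems. By definition a set $S$ is a final failed zero forcing set precisely when its complement $V(G)\setminus S$ is a final zero blocking set (a fort), and $|S|\ge k$ holds precisely when $|V(G)\setminus S|\le |V(G)|-k$. Consequently the map sending an instance $(G,k')$ of the final zero blocking problem to the instance $(G,\,|V(G)|-k')$ of the final failed zero forcing problem is a polynomial-time reduction that preserves yes/no answers. Since Shitov proved that the final zero blocking problem is NP-complete, NP-hardness transfers at once, and together with membership in NP this yields the theorem.

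The only point requiring genuine care is the correspondence under complementation: one must confirm that the stalled condition for $S$ (no black vertex with a unique white neighbor) is literally the defining fort condition for $V(G)\setminus S$, and that the size bound flips correctly via $|V(G)|=|S|+|V(G)\setminus S|$. Were Shitov's companion result unavailable, the real obstacle would reappear — one would have to design a gadget-based reduction from a canonical NP-complete problem such as $3$-SAT, engineering vertex gadgets whose only stalled colorings encode satisfying assignments. That combinatorial construction, not the NP membership or the bookkeeping of sizes, is where the difficulty of the result actually lies.
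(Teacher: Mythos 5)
Your NP-membership argument is fine, but it is the routine half; the hardness half of your proposal has a genuine circularity problem. The statement you were asked to prove \emph{is} Shitov's theorem --- in the paper it appears as a cited result, and the paper's surrounding text makes clear that what Shitov proved is exactly this failed-zero-forcing formulation (the paper introduces the zero blocking problem and says Shitov proved ``the complement version'' of it, i.e., the failed zero forcing problem). Your proposed hardness proof reduces the failed zero forcing problem to the final zero blocking problem via the complementation map $(G,k)\mapsto(G,\,|V(G)|-k)$ and then cites ``Shitov's result'' for the zero blocking problem. But as your own reduction shows, the two problems are the \emph{same} problem up to this trivial bijection; attributing the zero blocking version to Shitov and deriving the failed zero forcing version from it is just restating the theorem and then invoking it. No independent source of hardness is ever produced, so the proposal proves nothing beyond what it assumes. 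You half-acknowledge this yourself in the closing paragraph (``were Shitov's companion result unavailable, the real obstacle would reappear''), but the obstacle does not reappear conditionally --- it is present now, because the ``companion result'' is the theorem under proof.

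For comparison: the paper does not reprove this statement either (it cites \cite{2017s}), but when it needs actual hardness content --- for the stronger claim that the final zero blocking problem is NP-complete even on bipartite and on chordal graphs --- it supplies a genuine reduction from the vertex cover problem, building from $G$ a graph $G'$ with an apex vertex $s$ joined to a subdivision vertex $e_0$ for each edge $e$, and attaching to each $e_0$ a long path $e_1,\ldots,e_{2n}$ so that any small fort is forced to contain $s$ and a vertex cover of $G$. A self-contained proof of the statement you were given would have to contain a construction of this kind (from vertex cover, 3-SAT, or another canonical problem); that construction is precisely what your proposal defers and never carries out.
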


In the following we prove that the problem is NP-complete even for bipartite graphs and for chordal graphs.
Recall that a graph is {\em chordal} if every cycle of length at least four has a chord.
Our proof is by transforming the vertex cover problem to it.

\begin{problem}
{\bf The Vertex Cover Problem}\\
{\rm Given:} A graph $G$ and an integer $k$.\\
{\rm Question:} Does $G$ contain a vertex cover of size at most $k$?
\end{problem}

Notice that we may assume that $G$ is connected, since adding a new vertex adjacent to all old vertices increases the minimum size of a vertex cover by exactly one.
We may also assume that $k\le |V(G)|-2$,
since if $G=K_n$ then the minimum size of a vertex cover is equal to $|V(G)|-1$
otherwise it is less than $|V(G)|-1$,
meanwhile $B(K_n)=2$.

\begin{theorem}
The {final} zero blocking problem is NP-complete for bipartite graphs and for chordal graphs.
\end{theorem}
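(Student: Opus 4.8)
The plan is to prove membership in NP and then give a polynomial transformation from the Vertex Cover Problem, arranged so that one version of the construction outputs a bipartite graph and a second version outputs a chordal graph. Membership in NP is immediate for the final version: given a candidate set $F$, one verifies in polynomial time that $F$ is nonempty and that no vertex outside $F$ has exactly one neighbour in $F$; as recorded before the statement, the final and non-final versions are interchangeable for the decision question. Thus all the work is in the hardness reduction, and restricting the inputs to the bipartite (respectively chordal) class is automatic once each constructed graph is shown to lie in the relevant class.

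For the bipartite case I start from a Vertex Cover instance $(G,k)$ which, by the normalisations already recorded, may be assumed to have $G$ connected with at least one edge and $k\le n-2$, where $n=|V(G)|$; let $\tau$ denote the minimum size of a vertex cover of $G$. Build a bipartite graph $H$ with one side $V(G)\cup\{z\}$, where $z$ is a new \emph{anchor} vertex, and the other side consisting of one \emph{checker} $c_e$ for each edge $e=uv\in E(G)$, joining $c_e$ to $u$, to $v$, and to $z$. The design principle is that a white anchor converts the ``both-or-neither'' constraint a black checker would otherwise impose on $\{u,v\}$ into the ``at least one'' constraint of a vertex cover: if $z$ is white and $c_e$ is black, then $c_e$ already sees $z$, so it must see a second white neighbour, forcing $u$ or $v$ to be white. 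I would then prove the central correspondence $B(H)=\tau+1$. The easy inequality takes a minimum vertex cover $C$ and checks that $C\cup\{z\}$ is a final zero blocking set (fort): every checker sees $z$ together with at least one covered endpoint, hence two white neighbours, and every black original vertex sees only black checkers; this gives $B(H)\le\tau+1$. Since $G$ is simple and the anchor is present, $H$ has no pair of twins, so Theorem~\ref{twins} already yields $B(H)\ge 3$.

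The reverse inequality $B(H)\ge\tau+1$ is the main obstacle. Given any fort $F$, set $C=F\cap V(G)$ and try to extract a vertex cover of size at most $|F|-1$. When $z\in F$ and $F$ contains no white checker, $C$ is exactly a vertex cover and $|F|=|C|+1$: this is the clean case. The difficulty is to rule out, or charge away, the two non-canonical possibilities, namely forts that omit the anchor and forts that spend vertices on white checkers. If $z$ is black, then since $z$ is adjacent to every checker the fort condition forces the number of white checkers to be $0$ or at least $2$; in the subcase of $0$ white checkers every black checker imposes ``both-or-neither'' on its endpoints, so $C$ is a union of components of $G$, and connectedness forces $F=V(G)$, of size $n$, which is excluded because $k+1\le n-1$. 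The remaining subcases (two or more white checkers, and white-checker configurations when $z\in F$) I would dispatch by a local exchange and counting argument, replacing each white checker $c_e$ by a white endpoint of $e$ without increasing $|F|$ and without destroying the fort condition, thereby reducing to the canonical case; this is where the real effort lies, and it is exactly where connectivity and the bound $k\le n-2$ are used. Together the two directions show that $H$ has a fort of size at most $k+1$ if and only if $G$ has a vertex cover of size at most $k$.

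For the chordal case I would keep the anchor–checker design but make $V(G)\cup\{z\}$ a clique (so that $z$ becomes universal), still attaching each $c_e$ to $\{u,v,z\}$. Since $\{u,v,z\}$ is now a clique, every checker is simplicial, and eliminating all checkers leaves the clique $V(G)\cup\{z\}$; this perfect elimination ordering certifies that the graph is chordal. I would then re-verify that $C\cup\{z\}$ is still a fort and that the same correspondence $B(\cdot)=\tau+1$ holds, the analysis running in parallel to the bipartite case (the universal anchor again forcing the degenerate fort to be all of $V(G)$, of size $n$, which is excluded by $k+1\le n-1$). As both transformations are clearly computable in polynomial time, NP-completeness for bipartite graphs and for chordal graphs follows.
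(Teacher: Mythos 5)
Your reduction is the paper's construction stripped of its essential gadget, and the step you defer (``dispatch by a local exchange and counting argument'') is not merely the hard part --- it is impossible, because the equivalence $B(H)=\tau+1$ you aim for is false. The fatal case is exactly the one you leave open: anchor $z$ black together with two or more white checkers. If $G$ contains a cycle $v_1v_2\cdots v_gv_1$, then the set $F=\{c_{v_1v_2},c_{v_2v_3},\ldots,c_{v_gv_1}\}$ consisting of the checkers of its edges is a fort of $H$ of size $g$: the black anchor $z$ sees $g\ge 3$ white checkers; every other (black) checker has all three of its neighbours black; each $v_i$ sees exactly two white checkers; and every other original vertex sees none. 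So a triangle in $G$ forces $B(H)\le 3$ no matter how large $\tau$ is. Taking $G$ to be, say, a triangle with a large matching attached (connected, non-complete, $\tau$ arbitrarily large) and any $k$ with $2\le k<\tau$, the graph $H$ has a fort of size $3\le k+1$ while $G$ has no vertex cover of size at most $k$, so your reduction answers YES on a NO-instance. The same fort survives verbatim in your chordal variant, since making $V(G)\cup\{z\}$ a clique gives these white checkers no new black neighbours with a unique white neighbour. Your proposed repair also cannot work: replacing the white checker $c_{v_1v_2}$ by a white endpoint $v_1$ turns $c_{v_1v_2}$ into a black vertex with exactly one white neighbour. (A minor further slip: $H$ can have twins --- if a single vertex $b$ covers all edges of $G$, then $N(b)=N(z)$ --- so the claim $B(H)\ge 3$ via the twins theorem is not valid in general, though this is cosmetic compared with the above.)

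The paper's construction avoids this problem precisely by attaching to each checker $e_0$ a pendant path $e_0e_1\cdots e_{2n}$ of $2n$ extra vertices. In any fort of size at most $k+1<n$, each such path must contain two consecutive black vertices (otherwise the path alone contributes at least $n$ white vertices), and the fort condition then propagates blackness along the entire path; hence every checker is forced to be black. This eliminates all white-checker forts --- in particular the cycle forts above --- after which the anchor must be white and every black checker, already seeing the white anchor, must have a white endpoint, which yields the vertex cover. Your NP-membership argument, the forward inequality, and the bipartite/chordal class verifications are all fine; but without some gadget forcing the checkers to be black, the backward direction of your reduction is unrecoverable rather than merely unfinished.
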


\begin{proof}
Given a connected but non-complete graph $G$ of $n$ vertices and an integer $k\le n-2$,
we construct a bipartite graph $G'$ and prove that ``$G$ has a vertex cover of size at most $k$''
if and only if ``$G'$ has a {final} zero blocking set of size at most $k+1$''.
The theorem then follows from the fact that the vertex cover problem is NP-complete.
From $G$, let $G'$ be the graph with
vertex set $V(G')=\{s\} \cup V(G) \cup (\cup_{e\in E(G)} V_e)$ where
           $V_e=\{e_0, e_1, \ldots,e_{2n}\}$ and
edge set $E(G')=\{s e_0: e\in E(G)\} \cup \{xe_0,ye_0: e=xy\in E(G)\} \cup
           (\cup_{e\in E(G)} E_e)$,
where $E_e=\{e_0 e_1, e_1 e_2, \ldots, e_{2n-1} e_{2n}\}$.
Notice that $G'$ is a bipartite graph.

If $C$ is a vertex cover in $G$ of size at most $k$,
then $W=\{s\}\cup C$ is a {final} zero blocking set of $G'$ of size at most $k+1$
by checking the following cases:
a black vertex $x\in V(G)\backslash W$ is only adjacent to black vertex $e_0$ with $x\in e$,
a black vertex $e_i\in V_e\backslash \{e_0\}$ is only adjacent to black vertices in $V_e$, and a black vertex $e_0=xy$ is adjacent to white vertex $s$ and one white vertex in $\{x,y\}$, since $C$ is a vertex cover.

On the other hand, suppose $W$ is a {final} zero blocking set of $G'$ of size at most $k+1<n$.
First, every $V_e$ contains two black vertices $e_i$ and $e_{i+1}$ for otherwise it contains at least $n$ white vertices, impossible.
Consequently, all vertices of $V_e$ are black for all $e\in E(G)$.
Next, $s$ is white, for otherwise suppose $s$ is black.
As $|W|<n$, $V(G)$ has at least one black vertex $x$,
and for any neighbor $y$ of $x$,
the edge $xy=e$ causes that the black vertex $e_0$ implies that $y$ is also black.
Continue this process, since $G$ is connected, all vertices in $V(G)$ are black, impossible.
Now $s$ is white.
Then for every edge $e=xy$ in $G$, the black vertex $e_0$ has a white neighbor $s$ and hence has another white neighbor which is either $x$ or $y$.
Then $C=W\backslash \{s\}$ is a vertex cover of $G$ of size at most $k$, as desired.

Since the vertex cover problem is NP-complete,
the {final} zero blocking problem is NP-complete for bipartite graphs.

If we make the set $\{e_0: e\in E(G)\}$ a clique, then the graph $G'$ is chordal and the arguments above are precisely the same.
Hence, the {final} zero blocking problem is NP-complete for chordal graphs.
\end{proof}

In the following, we design a linear-time algorithm for finding $B(T)$ of a tree $T$
by using a dynamic programming approach,
which is a power method for solving many discrete optimization problems,
see \cite{1962bd,2025c,1977dL,1966n} for references.

Now a specific vertex $v$ is chosen from $G$.
One may consider $(G,v)$ as a graph rooted at the chosen vertex $v$.
A minimum zero blocking set $W$ of $(G,v)$ either contains $v$ or does not.
So it is useful to consider the following two parameters, which are the ordinary zero blocking number $B(G)$ with boundary conditions.
\begin{eqnarray*}
 B^1(G,v) &:=& \min\{|W|: \mbox{$v\notin W$ and $W$ is a {final} zero blocking set of $G$} \}. \\
 B^0(G,v) &:=& \min\{|W|: \mbox{$v\in W$ and $W$ is a {final} zero blocking set of $G$} \}.
\end{eqnarray*}
Notice that $\min \emptyset=\infty$.
For instance, if $G=P_n$ and $v$ is a leaf, then $B^1(G,v)=\infty$.

It is clear to have the following equality, since a {final} zero blocking set of $(G,v)$ either contain $v$ or does not.

\begin{lemma} \label{zbs1}
$B(G,v) = \min\{B^1(G,v)$, $B^0(G,v)\}$ for every rooted graph $(G,v)$.
\end{lemma}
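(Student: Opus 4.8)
The plan is to prove this directly from the definitions, since the statement is merely a case analysis on whether the root $v$ lies in a given final zero blocking set. Here $B(G,v)$ is understood as the ordinary zero blocking number of $G$, i.e.\ the minimum size of a final zero blocking set of $G$ (the notation only records the chosen root $v$). Thus it suffices to compare the two restricted minimizations defining $B^0(G,v)$ and $B^1(G,v)$ against this single unrestricted one.

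First I would let $\mathcal{F}$ denote the family of all final zero blocking sets of $G$ and partition it as $\mathcal{F}=\mathcal{F}_0\cup\mathcal{F}_1$, where $\mathcal{F}_0=\{W\in\mathcal{F}: v\in W\}$ and $\mathcal{F}_1=\{W\in\mathcal{F}: v\notin W\}$. By the very definitions, $B^0(G,v)=\min_{W\in\mathcal{F}_0}|W|$ and $B^1(G,v)=\min_{W\in\mathcal{F}_1}|W|$. Since each final zero blocking set either contains $v$ or does not, the partition is a disjoint cover of $\mathcal{F}$, so the minimum of $|W|$ over all of $\mathcal{F}$ splits as the minimum of these two partial minima; that is, $B(G,v)=\min\{B^0(G,v),B^1(G,v)\}$, as claimed.

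The only point requiring any care is the bookkeeping with the convention $\min\emptyset=\infty$. If one of the two classes is empty --- for instance, when $v$ is white in every final zero blocking set, so that $\mathcal{F}_0=\emptyset$ and $B^0(G,v)=\infty$ --- the formula still holds, because the infinite value is absorbed by the outer minimum. I would also note that $\mathcal{F}$ itself is nonempty for any graph on at least one vertex, so at least one of $B^0(G,v)$ and $B^1(G,v)$ is finite and the claimed minimum is genuinely attained. I expect no real obstacle here: the entire content lies in the observation, already recorded just before the statement, that membership of $v$ partitions the final zero blocking sets into two disjoint classes, and the lemma is an immediate consequence of that partition.
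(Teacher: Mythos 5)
Your proposal is correct and matches the paper's own justification, which simply observes that a final zero blocking set of $(G,v)$ either contains $v$ or does not, so the unrestricted minimum splits as the minimum of the two restricted ones. Your added care with the convention $\min\emptyset=\infty$ and the nonemptiness of the family (e.g.\ $W=V(G)$ is always a final zero blocking set) is a harmless elaboration of the same one-line argument.
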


The dynamic programming approach for the zero blocking set problem in trees needs the following graph operation.
The {\em composition} of two rooted graphs $(G,v)$ and $(H,u)$ is the rooted graph $(I,v)$   \index{composition}
obtained from the disjoint union of $(G,v)$ and $(H,u)$ by joining a new edge $vu$, see Figure \ref{s4f4} for an example.

\begin{figure}[htb]
\centering
\begin{tikzpicture}
\draw[very thick] (0,0) --(-1,-3)--(1,-3)--(0,0)--
(3,-1) --+(-1,-3)--+(1,-3)--+(0,0);
\draw[very thick,fill]
(0,0) circle (.1) node [above] {$v$}
(3,-1) circle (.1)  node [above] {$u$}
(0,-2) node {$G$}(3,-3) node {$H$};
\end{tikzpicture} 
\caption{The composition of two rooted graphs $(G,v)$ and $(H,u)$.} \label{s4f4}
\end{figure}
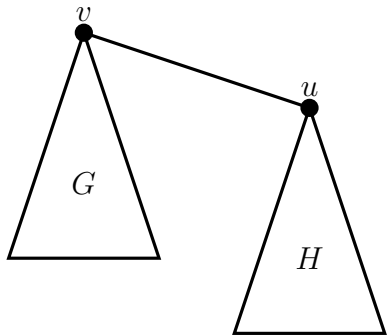

As a tree of $n$ vertices can be obtained from $n$ one-vertex graphs by a sequence of $n-1$ composition, to find the zero blocking  number of a tree,
it is useful to use $B^1(G,v)$, $B^0(G,v)$, $B^1(H,u)$ and $B^0(H,u)$ to compute $B^1(I,v)$ and $B^0(I,v)$.
During the derivation, there is a problem as follows.

Suppose $W$ is a {final} zero blocking set of $(I,v)$ with $v \notin W$, i.e., $v$ is black.
There are two cases.
For the case of $u \notin W$, the set $W\cap V(G)$ either is empty or is a {final} zero blocking set of $G$.
For the case of $u\in W$, the black vertex $v$ must have at least one white neighbor in $V(G)$, but may be just one.
So $W\cap V(G)$ may not be a {final} zero blocking set of $G$.
In order to resolve this difficulty, the following new parameter is introduced.
For a rooted graph $(G,v)$, a {\em {final} $v$-zero blocking set} is a subset $W\subseteq V(G)$ of white vertices such that $v$ is black, $v$ has at least one (but may be just one) white neighbor and every other black vertex can't have exactly one white neighbor.
\begin{eqnarray*}
 B^{11}(G,v) &:=& \min\{|W|: \mbox{$v\notin W$ and $W$ is a {final} $v$-zero blocking set of $G$} \}.
\end{eqnarray*}

The dynamic programming solution for the zero blocking number of a tree is based on the following theorem.

\begin{theorem} \label{zbs2}
For the composition $(I,v)$ of $(G,v)$ and $(H,u)$, the following hold.
\begin{enumerate}
\item[$(1)$] $B^1(I,v) = \min\{B^1(G,v), B^1(H,u), B^{11}(G,v) + B^0(H,u)\}$.
\item[$(2)$] $B^{11}(I,v)  = \min\{B^{11}(G,v), B^0(H,u)\}$.
\item[$(3)$] $B^0(I,v) = \min\{B^0(G,v) + B^{11}(H,u), B^0(G,v) + B^0(H,u)\}$.
\end{enumerate}
\end{theorem}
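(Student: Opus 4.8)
The plan is to prove all three identities by a single mechanism: a case analysis on the color of the non-root endpoint $u$ of the new edge $vu$. The one structural observation that drives everything is that $I$ differs from the disjoint union of $G$ and $H$ only in the edge $vu$; consequently, for any black vertex $x\notin\{v,u\}$ the neighborhood of $x$ is identical in $I$, in $G$, and in $H$, so the requirement ``$x$ does not have exactly one white neighbor'' is the same condition whether read in $I$ or in the piece containing $x$. Thus for a candidate set $W$ I write $W_G=W\cap V(G)$ and $W_H=W\cap V(H)$, and all the interaction is concentrated at $v$ and $u$, whose white-neighbor counts in $I$ exceed their counts inside $G$ and $H$ by exactly the contribution of the cross edge: the edge $vu$ adds one white neighbor to a black endpoint precisely when the opposite endpoint is white.

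For each identity I would establish the two inequalities at once by showing that the admissible sets $W$ split into the cases ``$u$ black'' and ``$u$ white'', and that in each case the minimum size of $W_G$ plus that of $W_H$ is exactly one of the listed parameters, with a matching construction. For $(1)$, $v$ is black; if $u$ is black the cross edge is inert, so $W_G$ is empty or a final zero blocking set of $G$ with $v$ black, and symmetrically for $W_H$, which yields the terms $B^1(G,v)$ and $B^1(H,u)$ (taking one side empty); if $u$ is white then $u$ supplies $v$ a white neighbor, so within $G$ the vertex $v$ is permitted to have exactly one white neighbor, which is precisely the $v$-zero blocking condition, giving $B^{11}(G,v)+B^0(H,u)$. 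For $(2)$ the root $v$ is itself the distinguished vertex allowed a single white neighbor; when $u$ is black the cross edge contributes nothing, so $W_G$ must be a final $v$-zero blocking set while $W_H$ may be taken empty, giving $B^{11}(G,v)$; when $u$ is white the requirement on $v$ is already met by $u$, so $W_G$ may be taken empty and $W_H$ is a final zero blocking set of $H$ with $u$ white, giving $B^0(H,u)$. For $(3)$, $v$ is white and hence unconstrained, so $W_G$ is always a final zero blocking set of $G$ with $v$ white, i.e.\ $B^0(G,v)$; the two cases are $u$ black (forcing $W_H$ to be a final $u$-zero blocking set, contributing $B^{11}(H,u)$) versus $u$ white (contributing $B^0(H,u)$).

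Throughout, the upper bounds come from pasting together optimal sets realizing the relevant parameters on each side and checking at $v$ and $u$ that no black vertex acquires exactly one white neighbor; the lower bounds come from reading off, from a given optimal $W$, which parameter $W_G$ and $W_H$ each realize. I expect the main obstacle to be the bookkeeping around the clause ``at least one (but may be just one) white neighbor'' in the definition of $B^{11}$. The whole reason that parameter is introduced is to capture the case in which the cross edge $vu$ rescues an endpoint that would otherwise violate finality by having exactly one white neighbor, and the delicate step is to decide in each subcase whether the root is genuinely forced to have a white neighbor on its own side (as $v$ is in case B of $(1)$, where $W_G$ must be a $v$-zero blocking set and hence nonempty) or may be rescued entirely by the cross edge (as $v$ is in case B of $(2)$, where $W_G$ may be empty). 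Keeping straight in every subcase exactly which endpoint is black, which supplies the extra white neighbor, and whether the ``exactly one white neighbor'' prohibition is active there is the crux; once that is settled the three formulas fall out directly.
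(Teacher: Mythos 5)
Your proposal is correct and takes essentially the same route as the paper's proof: a case analysis on the colour of $u$, translating finality in $I$ into conditions on $W\cap V(G)$ and $W\cap V(H)$ realizing $B^1$, $B^{11}$, or $B^0$ on each side, with pasting for the upper bounds. In particular, you correctly pin down the one delicate point that the paper's characterizations encode—namely that in case $(1)$ with $u$ white the set $W\cap V(G)$ must be a nonempty final $v$-zero blocking set of $G$, whereas in case $(2)$ with $u$ white it may be empty.
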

\begin{proof}
(1) This follows from that for $v\notin W$, $W$ is a {final} zero blocking set of $I$
if and only if either ($W$ is a {final} zero blocking set of $G$ or $H$ when $u\notin W$)
or else ($W\cap V(G)$ is a {final} $v$-zero blocking set of $G$ and $W\cap V(H)$ is a {final} zero blocking set of $H$ when $u\in W$).

(2) This follows from that for $v\notin W$, $W$ is a {final} $v$-zero blocking set of $I$
if and only if either ($W$ is a {final} $v$-zero blocking set of $G$ when $u\notin W$)
or else ($W$ is a {final} zero blocking set of $H$ when $u\in W$).

(3) This follows from that for $v\in W$, $W$ is a {final} zero blocking set of $I$
if and only if either ($W\cap V(G)$ is a {final} zero blocking set of $G$ and $W\cap V(H)$ is a {final} $u$-zero blocking set of $H$ when $u\notin W$)
or else ($W\cap V(G)$ is a {final} zero blocking set of $G$ and $W\cap V(H)$ is a {final} zero blocking set of $H$ when $u\in W$).
\end{proof}

Lemma \ref{zbs1} and Theorem \ref{zbs2} then give the following linear-time algorithm
for the zero blocking set problem in trees.

\begin{algorithm}[htp!]
\caption{\bf zbsTreeD.}\label{algo1}
\begin{algorithmic}[xxx]
  \Require A tree $T=(V,E)$.
  \Ensure $B(T)$.
\State
Choose a vertex $v_n$ and order $V$ into $v_1,v_2,\ldots, v_n$
such that $d(v_n,v_i)<d(v_n,v_j)$ implies $i>j$.
For $1\le i<n$, let $p_i$ be the unique neighbor of $v_i$ nearer to $v_n$ than $v_i$;
\For {$i=1$ to $n$}
\State $B^1(v_i)\,\, \leftarrow \infty$;
\State $B^{11}(v_i) \leftarrow \infty$;
\State $B^0(v_i)\,\, \leftarrow 1$;
\EndFor
\For {$i=1$ to $n-1$}
\State  $B^1(p_i)\,\, \leftarrow \min\{B^1(p_i),B^1(v_i),B^{11}(p_i)+B^0(v_i)\}$;
\State  $B^{11}(p_i) \leftarrow \min\{B^{11}(p_i),B^0(v_i)\}$;
\State  $B^{0}(p_i)\,\, \leftarrow \min\{B^0(p_i) + B^{11}(v_i),B^0(p_i) + B^0(v_i)\}$;
\EndFor
\State $B(T) \leftarrow \min\{B^1(v_n), B^0(v_n)\}$;
\end{algorithmic}
\end{algorithm}

The above method can be generalized to {\em block} graphs, which are graphs whose blocks are complete graphs.

It is expected that polynomial-time algorithms for the problem in interval graphs and strongly chordal graphs exist.

\frenchspacing

\end{document}